\newcommand{\Z}{\mathbb Z}
\newcommand{\sm}[4]{\begin{psmallmatrix}#1&#2\\#3&#4 \end{psmallmatrix}}
\newcommand{\set}[1]{\left\{#1\right\}}
\newcommand{\inv}{^{-1}}
\newcommand{\paren}[1]{\left(#1\right)}
\newcommand{\ceil}[1]{\left\lceil#1\right\rceil}
\newcommand{\ZZ}{\mathbb{Z}}
\newtheorem{theorem}{Theorem}
\newtheorem{lemma}[theorem]{Lemma}
\numberwithin{equation}{section}
\begin{document}

\title[Coefficients of modular functions]{Congruences for coefficients of modular functions in levels 3, 5, and 7 with poles at 0}

\author{Paul Jenkins}
\address{Department of Mathematics, Brigham Young University, Provo, UT 84602}
\curraddr{}
\email{jenkins@math.byu.edu}
\thanks{This work was partially supported by a grant from the Simons Foundation ($\# 281876$ to Paul Jenkins).}

\author{Ryan Keck}
\address{}
\curraddr{}
\email{ryank@mathematics.byu.edu}
\thanks{}

\subjclass[2010]{Primary 11F30, 11F37}

\date{\today}

\dedicatory{}

\commby{}

\begin{abstract}
We give congruences modulo powers of $p \in \{3, 5,7\}$ for the Fourier coefficients of certain modular functions in level $p$ with poles only at 0, answering a question posed by Andersen and the first author and continuing work done by the authors and Moss. The congruences involve a modulus that depends on the base $p$ expansion of the modular form's order of vanishing at $\infty$.
\end{abstract}

\maketitle

\section{Introduction}
A modular form $f(z)$ of level $N$ and weight $k$ is a complex valued function which is holomorphic on the upper half plane, satisfies the equation
\[f\paren{\frac{az+b}{cz+d}} = (cz+d)^k f(z) \text{ for all } \sm{a}{b}{c}{d} \in \Gamma_0(N),\]
and is holomorphic at the cusps of $\Gamma_0(N).$ Letting $q=e^{2 \pi i z}$, modular forms have a Fourier expansion $f(z) = \sum_{n \geq 0} a(n)q^n$ with Fourier coefficients $a(n)$. A {\em weakly holomorphic} modular form is a modular form that is allowed to be meromorphic at the cusps; we define $M_k^\sharp (N)$ to be the space of weakly holomorphic modular forms of weight $k$ and level $N$ that are holomorphic away from the cusp at $\infty,$ and in the same notation as~\cite{jenkins}, we use $M_k^\flat(N)$ to denote forms holomorphic away from the cusp at 0. For prime $N$, these are the only cusps. Modular forms in both of these spaces also have Fourier expansions at infinity, where the constraint $n \geq 0$ is relaxed to $n \gg -\infty$ if the form has a pole at infinity.

The Fourier coefficients of modular forms often satisfy interesting congruences. For the $j$-invariant $j(z) = q \inv + 744 + \sum_{n=1}^\infty c(n)q^n \in M_0^\sharp(1)$, Lehner~\cite{lehner1,lehner2} proved that the $c(n)$ satisfy the congruence
\[c(2^a3^b5^c7^dn) \equiv 0 \pmod{2^{3a+8}3^{2b+3}5^{c+1}7^d} \text{ if } a, b, c, d \geq 1.\]
Kolberg~\cite{kolberg1,kolberg2}, Aas~\cite{aas}, and Allatt and Slater~\cite{allatt} strengthened Lehner's congruences for $j(z)$. Furthermore, Griffin~\cite{griffin} gave a canonical basis for $M_0^\sharp(1)$ and extended Kolberg's and Aas's results to the basis elements. Similarly, the first author, Andersen, and Thornton~\cite{andersen,thornton1} proved congruences for the Fourier coefficients of elements of canonical bases for $M_0^\sharp(p)$ with $p = 2, 3, 5, 7$. The authors and Moss~\cite{jenkins} proved congruences for the Fourier coefficients of elements of a canonical basis for $M_0^\flat(2)$. It is natural to wonder whether a similar result holds for bases of $M_0^\flat(p)$ for the other genus zero primes, mirroring the results of $M_0^\sharp(p)$.

Let $p \in \{2,3,5,7,13\}$. Taking $\eta(z) = q^{\frac{1}{24}} \prod_{n=1}^\infty \left(1 - q^n \right)$ to be the Dedekind eta function, a Hauptmodul for $\Gamma_0(p)$ is
\[\phi^{(p)}(z) = \paren{\frac{\eta(pz)}{\eta(z)}}^{24/(p - 1)} =  q+\frac{24}{p - 1}q^2+\cdots,\]
which vanishes at $\infty$ and has a pole only at 0. The functions $(\phi^{(p)}(z))^m$ for $m \geq 0$ are a basis for $M_0^\flat(p).$ Andersen and the first author used powers of $\phi^{(p)}(z)$ to prove congruences involving $ \psi^{(p)} = \frac{1}{\phi^{(p)}} = q\inv - \frac{24}{p - 1} + \cdots \in M_0^\sharp(p)$ in~\cite{andersen}, and made the following remark: ``Additionally, it appears that powers of the function $[\phi^{(p)}(z)]$ have Fourier coefficients with slightly weaker divisibility properties... It would be interesting to more fully understand these congruences.'' The authors with Moss in~\cite{jenkins} proved congruences for the Fourier coefficients of $\phi^{(2)}(z)$ and its powers. In this paper, we use similar techniques to obtain congruences for $\phi^{(p)}(z)$ and its powers for $p = 3, 5, 7$.

Write $\phi^{(p)}(z)^m = \sum_{n=m}^\infty a^{(p)}(m,n)q^n$. Let $\chi_S$ be the characteristic function on $S$, which outputs $1$ when the input is an element of $S$ and $0$ otherwise. The main result of this paper is the following theorem.

\begin{restatable}{theorem}{congruence}
\label{congruence}
Let $p \in \{3, 5, 7\}$. Let $n = p^\alpha n'$ where $p \nmid n'.$ Express the base $p$ expansion of $m$ as $a=\sum_{i=1}^{\infty}a_i p^{i - 1}$, where $a_i=0$ for all sufficiently large $i$. Consider the rightmost $\alpha$ digits $a_\alpha\dots a_2 a_1$. Let $i'$ be the index of the rightmost nonzero digit, or $i' = -1$ if $a_1 = a_2 = \cdots =  a_\alpha = 0$. Let
\begin{align*}
    \gamma_3(m,\alpha) &=
    \begin{cases}
    3 - a_{i'} + 2\#\set{i\ | \ a_i = 0, i > i'} + \#\set{i\ | \ a_i = 1, i > i'} &\text{if } i' \geq 0,\\
    0 &\text{otherwise.}
    \end{cases}\\
    \gamma_5(m,\alpha) &= \gamma_7(m,\alpha)=
    \begin{cases}
    \chi_{\set{1, 2}}(a_{i'}) + \#\set{i\ | \ a_i \in \set{0,1}, i > i'} &\text{if } i' \geq 0,\\
    0 &\text{otherwise.}
    \end{cases}\\
\end{align*}
Then
\[a^{(p)}(m,p^\alpha n') \equiv 0 \pmod{p^{\gamma_p(m,\alpha)}}.\]
\end{restatable}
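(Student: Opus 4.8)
The plan is to study the action of the Hecke operator $U_p$, defined on $q$-expansions by $U_p\paren{\sum a(n)q^n} = \sum a(pn)q^n$, on the powers of $\phi = \phi^{(p)}$. First I would check that $U_p$ maps $M_0^\flat(p)$ to itself: since $U_p$ preserves holomorphy at $\infty$ and sends level-$p$ functions to level-$p$ functions, and $\phi^m \in M_0^\flat(p)$ has a pole only at $0$, the function $U_p(\phi^m)$ again lies in $M_0^\flat(p)$. Because $\phi = q + \cdots$ vanishes to order $m$ at $\infty$, the $q$-expansion of $U_p(\phi^m)$ begins at $q^{\ceil{m/p}}$, so $U_p(\phi^m)$ is a polynomial in $\phi$ with lowest term $\phi^{\ceil{m/p}}$; write
\[U_p(\phi^m) = \sum_k c_{m,k}\,\phi^k,\]
a finite sum over $k \geq \ceil{m/p}$. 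Comparing coefficients of $q^N$ on both sides yields the fundamental recurrence
\[a^{(p)}(m,pN) = \sum_k c_{m,k}\,a^{(p)}(k,N),\]
which is the engine driving an induction on $\alpha$: taking $N = p^{\alpha-1}n'$ expresses $a^{(p)}(m,p^\alpha n')$ in terms of the coefficients $a^{(p)}(k,p^{\alpha-1}n')$ one level down.

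Second, I would pin down the $p$-adic valuations of the structure constants $c_{m,k}$, which is the heart of the argument. The $p$ functions $\phi((z+j)/p)$ for $0 \le j < p$ are the roots of a monic degree-$p$ polynomial $\prod_{j}\paren{X - \phi((z+j)/p)} = X^p - s_1 X^{p-1} + \cdots + (-1)^p s_p$ whose elementary symmetric functions $s_i$ are polynomials in $\phi$ with $p$-integral coefficients, and the Kronecker-type congruence $\prod_j \paren{X - \phi((z+j)/p)} \equiv X^p - \phi \pmod p$ forces $s_1 \equiv \cdots \equiv s_{p-1} \equiv 0 \pmod p$. Since $p\,U_p(\phi^m) = \sum_j \phi((z+j)/p)^m$ is the $m$-th power sum of these roots, Newton's identities express it recursively through the $s_i$; the near-total $p$-divisibility of $s_1,\dots,s_{p-1}$ then propagates, after accounting for the leading factor $p$, into an explicit lower bound for $v_p(c_{m,k})$ in terms of the base-$p$ digits of $m$. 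The detailed valuations depend on $p$ and must be computed prime-by-prime, using the normalization $\phi|W_p = p^{-12/(p-1)}/\phi$ to control behavior at the cusp $0$; this is the source of the distinct patterns reflected in $\gamma_3$ versus $\gamma_5 = \gamma_7$.

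Third, with the valuation bound in hand I would run the main induction on $\alpha$. The base case $\alpha = 0$ is immediate since $\gamma_p(m,0) = 0$. For the inductive step, the recurrence together with the induction hypothesis $a^{(p)}(k,p^{\alpha-1}n') \equiv 0 \pmod{p^{\gamma_p(k,\alpha-1)}}$ shows that each term on the right is divisible by $p^{v_p(c_{m,k}) + \gamma_p(k,\alpha-1)}$, so it suffices to verify the digit inequality
\[v_p(c_{m,k}) + \gamma_p(k,\alpha-1) \geq \gamma_p(m,\alpha) \qquad \text{for every } k \text{ in the sum.}\]
Here the definition of $\gamma_p$ is exactly calibrated so that passing from $m$ to the extremal value $k = \ceil{m/p} \approx \lfloor m/p\rfloor$ shifts the window of relevant digits from the rightmost $\alpha$ digits of $m$ to the rightmost $\alpha - 1$ digits of $\lfloor m/p\rfloor$, i.e. peels off the rightmost digit $a_1$, whose contribution must be absorbed by $v_p(c_{m,k})$.

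The step I expect to be the main obstacle is establishing sharp, uniform lower bounds for $v_p(c_{m,k})$ and then matching them against the combinatorial function $\gamma_p$. Getting the valuation of the structure constants exactly right — rather than merely a crude bound — is essential, because $\gamma_p$ is tight: any slack in $v_p(c_{m,k})$ for the values of $k$ near $\ceil{m/p}$, where the digit $a_{i'}$ and the boundary term of $\gamma_p$ live, would break the inequality. I anticipate a careful case analysis on the rightmost digit $a_1$ of $m$ and on whether carries occur when forming $k$, together with separate bookkeeping for $p = 3$ (where a digit $d$ contributes $2-d$, with a $+1$ boundary correction at $i'$) and for $p \in \set{5,7}$ (where only digits in $\set{0,1}$ contribute).
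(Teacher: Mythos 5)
Your overall strategy---write $U_p(\phi^m)$ as an integer polynomial in $\phi$ via the cusp behavior under the Fricke involution, extract $p$-divisibility of its coefficients from Newton's identities applied to the conjugates $\phi((z+j)/p)$, and induct on $\alpha$---is the same as the paper's. The one genuinely different choice is how you organize the induction: you unfold everything into the coefficient recurrence $a^{(p)}(m,pN)=\sum_k c_{m,k}\,a^{(p)}(k,N)$ and must then verify the global inequality $v_p(c_{m,k})+\gamma_p(k,\alpha-1)\geq\gamma_p(m,\alpha)$ for every $k$ with $\ceil{m/p}\leq k\leq pm$, including values of $k$ whose base-$p$ digits bear no relation to those of $m$. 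The paper instead carries a stronger induction hypothesis: it defines sets $P^{(p)}(\ell,a)$ of polynomials whose $\phi^k$-coefficient is divisible by $p^{\delta_p(k-\ell)+a}$, shows $U_p^\alpha\phi^m$ lies in $P^{(p)}(f^\alpha_{(p)}(m),\gamma_p(m,\alpha))$, and closes the induction with a chain of containments $P^{(p)}(\ceil{m/p},c_m+\delta_p(m-i))\subseteq P^{(p)}(\ceil{(m-1)/p},c_{m-1}+\delta_p(m-1-i))$ that only requires a three-case check on $m\bmod p$. Your global digit inequality is in fact true (it telescopes from the local fact that $\gamma_p(\cdot,\beta)$ drops by at most $\delta_p$ per unit step, with near-equality only at $k=\ceil{m/p}$, where the identity $\gamma_p(m,\alpha)=c_m+\gamma_p(\ceil{m/p},\alpha-1)$ is exactly the content of Lemma~\ref{binarygammma}), but proving it directly is more delicate than the paper's local containments, precisely because the induction hypothesis you carry is weaker: you retain only the leading divisibility $\gamma_p(k,\alpha-1)$ at each stage rather than the full graded divisibility of the polynomial.

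The concrete gap is in your second step. The congruence $\prod_j\paren{X-\phi((z+j)/p)}\equiv X^p-\phi\pmod p$ only gives $v_p(s_i)\geq 1$ for $i<p$, and feeding that into Newton's identities yields valuations that grow far too slowly: for $p=3$ you need the coefficient of $\phi^k$ in $U_3\phi^m$ to be divisible by $3^{4(k-\ceil{m/3})+c_m}$ (a slope of $4$ per unit increase in $k$, plus a boundary term $c_m=2$ when $m\equiv1\pmod 3$), and mod-$p$ divisibility of the symmetric functions cannot produce this. What the paper actually uses is Lehner's explicit result (Lemma~\ref{polyrelation}): after the normalization $h^{(p)}_\ell=p^{12/(p-1)}\phi((z+\ell)/p)$, the symmetric functions are $g_j=(-1)^{j+1}p^{12/(p-1)+2}\sum_{\ell\geq j}b_\ell^{(p)}\phi^{\ell-j+1}$; for instance $g_1=10\cdot3^9\phi+4\cdot3^{14}\phi^2+3^{18}\phi^3$. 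Those large explicit powers of $p$, together with a product lemma tracking how the exponent pattern $\delta_p(k-\ell)$ behaves under multiplication in $\Z[\phi]$, are what make the Newton recursion $S_m=g_1S_{m-1}-g_2S_{m-2}+\cdots$ produce the stated valuations, including the distinction $\delta_3=4$ versus $\delta_5=\delta_7=1$. Without citing or reproving Lehner's computation, your valuation bound for the structure constants $c_{m,k}$ does not get off the ground, and everything downstream depends on it.
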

The power of $p$ in the congruence includes a count of the number of digits in the base $p$ expansion of $m$ that are $0$, $1$, or $2$. This result is similar to the one found for $\phi^{(2)}$, but it is more complicated to state because there are more digits in bases $3, 5, 7$. We note that this congruence is not sharp. For $m = 1,$ Allatt and Slater in~\cite{allatt} proved a stronger result that provides an exact congruence for many $n$.

As an example, the base $3$ expansion for $m = 102$ is $m = \cdots 00010210.$ Table~\ref{gammaexample} gives values of $\gamma_3$.
\begin{table}[b]
\label{gammaexample}
\begin{tabular}{|c||c|c|c|c|c|c|c|c|c|c|c|c|c|c|}
\hline
$\alpha$            & 0 & 1 & 2 & 3 & 4 & 5 & 6 & 7 & 8 & 9 & $\cdots$ & $\alpha$     & $\cdots$\\
\hline
$\gamma(102,\alpha)$ & 0 & 0 & 2 & 2 & 4 & 5 & 7 & 9 & 11 & 13 & $\cdots$ & $2(\alpha - 5)$ + 5 & $\cdots$\\ \hline
\end{tabular}
\caption{Values of $\gamma_3(m,\alpha)$ for $m = 102$}
\end{table}
Notice that once $\alpha$ surpasses 5---the leftmost nonzero digit in the base $3$ expansion of $m$ occurs in the 5th place---$\gamma_3$ always increases by 2 as $\alpha$ increases by 1. This illustrates that $\gamma_3(m,\alpha)$ is unbounded for a fixed $m$. Similar examples can be constructed for $\gamma_5$ and $\gamma_7$.

Section 2 contains the machinery and definitions we use for proving Theorem~\ref{congruence} and the proof itself is in Section 3. In Section $4$ we discuss the $p = 13$ case; although $13$ is also a genus zero prime, we do not obtain congruences modulo $13$.

We thank the referee for helpful comments which helped to improve this paper.

\section{Preliminary Lemmas}

The operator $U_p$ on a function $f(z)$ is given by
\[U_p f(z) = \dfrac{1}{p} \sum_{j = 0}^{p-1} f\paren{\frac{z+j}{p}}.\]
If $M_k^!(N)$ is the space of weakly holomorphic modular forms of weight $k$ and level $N$, meaning we allow poles at any cusp, then we have $U_p : M_k^!(N) \to M_k^!(N)$ if $p$ divides $N$. If $f(z)$ has the Fourier expansion $\sum^\infty_{n = n_0}a(n)q^n$, then the effect of $U_p$ on the Fourier expansion is given by $U_p f(z) = \sum^\infty_{n = n_0}a(pn)q^n$.

We prove Theorem~\ref{congruence} by first proving several facts about $\phi^{(p)}$ and its image under the $U_p$ operator.  Most importantly, $U_p(\phi^{(p)})$ is a polynomial with integer coefficients in $\phi^{(p)}$.  If all of the coefficients are divisible by some number, then every $p$th coefficient of $\phi^{(p)}$ is divisible by that number. The same holds true when applying $U_p$ to any power of $\phi^{(p)}$ as well.

The following result describes how $U_p$ applied to a modular function behaves under the Fricke involution. This will help us in Lemma~\ref{polynomial} to write $U_p(\phi^{(p)})^m$ as a polynomial in $\phi^{(p)}$.

\begin{lemma}[{\cite[Theorem~4.6]{apostol}}]
\label{fricke}
Let $p$ be prime and let $f(z)$ be a level $p$ modular function. Then
\[p(U_pf)\paren{\frac{-1}{pz}} = p(U_pf)(pz)+f\paren{\frac{-1}{p^2z}}-f(z).\]
\end{lemma}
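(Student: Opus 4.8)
The plan is to recognize that $p\,U_p$ is the weight-zero slash action of the matrices $\twotwomat{1}{j}{0}{p}$ for $j = 0, \dots, p-1$, while the Fricke involution $z \mapsto \frac{-1}{pz}$ is the slash action of $\twotwomat{0}{-1}{p}{0}$. I would first clear the $1/p$ normalization, writing $p(U_pf)(z) = \sum_{j=0}^{p-1} f\paren{\frac{z+j}{p}}$, and then unfold both sides of the claimed identity into sums of $f$ evaluated at explicit fractional-linear images of $z$. Since $f$ has weight $0$, there are no automorphy factors to track, and the slash action is just composition of M\"obius transformations, so scalar multiples of matrices act identically.

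First I would compute the left-hand side directly: substituting $z \mapsto \frac{-1}{pz}$ gives $p(U_pf)\paren{\frac{-1}{pz}} = \sum_{j=0}^{p-1} f\paren{\frac{jpz-1}{p^2 z}}$, which is the slash action of the products $\twotwomat{1}{j}{0}{p}\twotwomat{0}{-1}{p}{0} = \twotwomat{jp}{-1}{p^2}{0}$. The $j = 0$ term is immediately $f\paren{\frac{-1}{p^2 z}}$, matching a term on the right. Separately I would unfold the right-hand side's main term as $p(U_pf)(pz) = \sum_{j=0}^{p-1} f\paren{z + \frac{j}{p}}$, so that after subtracting $f(z)$ the right-hand side becomes $f\paren{\frac{-1}{p^2 z}} + \sum_{k=1}^{p-1} f\paren{z + \frac{k}{p}}$.

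The heart of the argument is then to match the remaining terms $j = 1, \dots, p-1$ of the left side with the terms $k = 1, \dots, p-1$ of the right side. For each such $j$ I would seek a factorization $\twotwomat{jp}{-1}{p^2}{0} = \gamma \twotwomat{p}{k}{0}{p}$ with $\gamma \in \Gamma_0(p)$; solving the entrywise equations forces $\gamma = \twotwomat{j}{b}{p}{-k}$ subject to the single Diophantine condition $jk + bp = -1$. Because $\gcd(j,p) = 1$, this is solvable, determining $k \equiv -j\inv \pmod p$ uniquely in $\set{1, \dots, p-1}$, and since $f$ is $\Gamma_0(p)$-invariant the factor $\gamma$ drops out under the slash, giving $f\paren{\frac{jpz-1}{p^2 z}} = f\paren{z + \frac{k}{p}}$.

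Finally I would observe that $j \mapsto k \equiv -j\inv \pmod p$ is a bijection of $\set{1, \dots, p-1}$ onto itself, so summing over $j$ reproduces exactly $\sum_{k=1}^{p-1} f\paren{z + \frac{k}{p}}$; combining this with the isolated $j = 0$ term yields the stated identity. The main obstacle is the bookkeeping of this coset factorization: confirming that the forced matrix genuinely lies in $\Gamma_0(p)$---that the determinant equation and the congruence $jk \equiv -1 \pmod p$ are compatible---and verifying the bijectivity of $j \mapsto k$, which is precisely what guarantees that no terms are double-counted or omitted when the two sums are identified.
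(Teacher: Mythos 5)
Your proof is correct: the coset factorization $\sm{jp}{-1}{p^2}{0} = \gamma\sm{p}{k}{0}{p}$ forces $a=j$, $c=p$, $d=-k$, and $jk+bp=-1$, so $\det\gamma = -(jk+bp) = 1$ automatically; solvability follows from $\gcd(j,p)=1$, and the map $j \mapsto k \equiv -j\inv \pmod p$ is indeed a bijection of $\set{1,\dots,p-1}$, so the two sums match term by term after the $j=0$ and $k=0$ terms are peeled off. The paper supplies no proof of this lemma, citing Apostol's Theorem~4.6 instead, and your argument is essentially the standard unfolding/coset-rearrangement proof given there.
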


The Fricke involution $\sm{0}{-1}{p}{0}$ swaps the cusps of $\Gamma_0(p)$, which are 0 and $\infty.$ We will use this fact in the proof of Lemma~\ref{polynomial}, and the following relations between $\phi^{(p)}(z)$ and $\psi^{(p)}(z)$ will help us compute this involution.

\begin{lemma}[{\cite[Lemma~3]{andersen}}]
\label{phipsi}
The functions $\phi(z)$ and $\psi(z)$ satisfy the relations
\begin{align*}
\phi^{(p)}\paren{\frac{-1}{pz}} &= p^{-12/(p - 1)}\psi^{(p)}(z),\\
\psi^{(p)}\paren{\frac{-1}{pz}} &= p^{12/(p - 1)}\phi^{(p)}(z).
\end{align*}
\end{lemma}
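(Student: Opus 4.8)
The plan is to reduce everything to the classical transformation law of the Dedekind eta function under the modular inversion $z \mapsto -1/z$, namely $\eta(-1/z) = \sqrt{-iz}\,\eta(z)$, since both $\phi^{(p)}$ and $\psi^{(p)}$ are built as eta quotients. The key observation is that the Fricke point $\frac{-1}{pz}$ interacts cleanly with the two eta factors in $\phi^{(p)}(z) = \paren{\eta(pz)/\eta(z)}^{24/(p-1)}$: evaluating the numerator argument gives $\eta\paren{p \cdot \frac{-1}{pz}} = \eta(-1/z)$, while the denominator argument is $\eta\paren{\frac{-1}{pz}}$, and both are plain inversions to which the transformation law applies directly.

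First I would apply the eta transformation to each factor, obtaining $\eta(-1/z) = \sqrt{-iz}\,\eta(z)$ and, by substituting $pz$ for $z$, $\eta\paren{\frac{-1}{pz}} = \sqrt{-ipz}\,\eta(pz)$. Taking the quotient, the eta factors rearrange into $\eta(z)/\eta(pz)$ while the square root factors collapse: $\sqrt{-iz}/\sqrt{-ipz} = 1/\sqrt{p}$, since $(-iz)/(-ipz) = 1/p$ is a positive real constant. Raising the resulting ratio to the power $24/(p-1)$ then yields
\[\phi^{(p)}\paren{\frac{-1}{pz}} = p^{-12/(p-1)}\paren{\frac{\eta(z)}{\eta(pz)}}^{24/(p-1)} = p^{-12/(p-1)}\psi^{(p)}(z),\]
which is exactly the first relation.

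The second relation then follows with no further eta computation. Since $\psi^{(p)} = 1/\phi^{(p)}$ by definition, I would simply take reciprocals of the identity just proved, giving
\[\psi^{(p)}\paren{\frac{-1}{pz}} = \frac{1}{\phi^{(p)}\paren{\frac{-1}{pz}}} = \frac{p^{12/(p-1)}}{\psi^{(p)}(z)} = p^{12/(p-1)}\phi^{(p)}(z).\]

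The only point requiring care --- rather than a genuine obstacle --- is the branch of the square root in the eta transformation, and here it is benign. For $z$ in the upper half plane both $-iz$ and $-ipz$ have positive real part, so the principal square roots are unambiguous and their ratio is the positive real number $1/\sqrt{p}$. Moreover $24/(p-1)$ is an even integer for every $p \in \set{3,5,7}$ (namely $12$, $6$, and $4$), so after raising to this power all half-integer powers and sign ambiguities vanish, leaving the honest integer power $p^{12/(p-1)}$ with exponent $12/(p-1) \in \set{6,3,2}$. Thus no delicate multiplier-system bookkeeping is needed beyond this observation.
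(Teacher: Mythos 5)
Your proof is correct. The paper itself gives no argument for this lemma---it simply cites \cite[Lemma~3]{andersen}---and your computation via the eta inversion formula $\eta(-1/z)=\sqrt{-iz}\,\eta(z)$ applied to both factors of the eta quotient, followed by taking reciprocals for the second identity, is exactly the standard argument behind the cited result; your attention to the branch of the square root and to the integrality of the exponent $12/(p-1)$ is appropriate and handled correctly.
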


The following lemma is a special case of a result of Lehner~\cite{lehner2}. It provides a polynomial with functions as its coefficients whose roots are modular forms used in the proof of Theorem~\ref{betterbound}.

\begin{lemma}[{\cite[Theorem~2]{lehner2}}]
\label{polyrelation}
There exist integers $b_j^{(p)}$ such that
\[U_p\phi^{(p)}(z) = p \sum_{j = 1}^p b_j^{(p)} \phi^{(p)}(z)^j.\]
Furthermore, let $h^{(p)}(z) = p^{12/(p - 1)}\phi^{(p)}(z/p)$. Then
\[(h^{(p)}(z))^p + \sum_{j = 1}^p (-1)^j g_j(z)(h^{(p)}(z))^{p - j} = 0\]
where
\[g_j(z) = (-1)^{j+1} p^{12/(p-1) + 2} \sum_{\ell = j}^p b_\ell^{(p)} \phi^{(p)}(z)^{\ell - j + 1}.\]
\end{lemma}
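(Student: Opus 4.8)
The plan is to prove the two assertions in turn, treating the first as a pole-counting argument and the second as a modular-equation computation whose coefficients are then forced by the first.

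For the first assertion I would begin by showing that $U_p\phi^{(p)}$ lies in $M_0^\flat(p)$ with a pole of order at most $p$ at $0$. Since $U_p$ preserves level $p$ and $\phi^{(p)}$ is holomorphic on the upper half plane, $U_p\phi^{(p)}$ is a weight-$0$ weakly holomorphic form on $\Gamma_0(p)$, holomorphic away from the cusps; and because $\phi^{(p)} = q + O(q^2)$, its image $U_p\phi^{(p)} = \sum_{n\ge1}a^{(p)}(1,pn)q^n$ vanishes at $\infty$. To bound the pole at $0$ I would read off the expansion there by applying the Fricke relation of Lemma~\ref{fricke} to $f=\phi^{(p)}$: as $z\to i\infty$ the terms $p(U_p\phi^{(p)})(pz)$ and $\phi^{(p)}(z)$ vanish, while Lemma~\ref{phipsi} gives $\phi^{(p)}(-1/(p^2z)) = p^{-12/(p-1)}\psi^{(p)}(pz) = p^{-12/(p-1)}q^{-p} + \cdots$. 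Hence $(U_p\phi^{(p)})(-1/(pz))$ has a pole of order exactly $p$ at $\infty$, so $U_p\phi^{(p)}$ has a pole of order $p$ at the cusp $0$. Since the introduction gives $\set{(\phi^{(p)})^j}_{j\ge0}$ as a basis of $M_0^\flat(p)$ with $(\phi^{(p)})^j$ of pole order $j$ at $0$, and the change of basis to $\set{q^j}$ is unit upper triangular over $\ZZ$, it follows that $U_p\phi^{(p)} = \sum_{j=1}^p c_j(\phi^{(p)})^j$ with $c_j\in\ZZ$ and no constant term.

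The factor of $p$ is the substantive point of this half, and I would extract it from a congruence mod $p$. Reducing the eta-quotient, $(1-q^{pn})\equiv(1-q^n)^p\pmod p$ gives $\phi^{(p)}\equiv q\prod_{n\ge1}(1-q^n)^{24}=\Delta\pmod p$, the discriminant form. Therefore $U_p\phi^{(p)}\equiv U_p\Delta\pmod p$, and from the Hecke relation $\tau(pn) = \tau(p)\tau(n) - p^{11}\tau(n/p)$ we get $U_p\Delta\equiv\tau(p)\Delta\pmod p$. For $p\in\set{3,5,7}$ one has $p\mid\tau(p)$ (indeed $\tau(3)=252$, $\tau(5)=4830$, $\tau(7)=-16744$), so $U_p\phi^{(p)}\equiv0\pmod p$; since the $(\phi^{(p)})^j=q^j+\cdots$ are linearly independent over $\FF_p$, reducing the relation $\sum c_j(\phi^{(p)})^j\equiv0\pmod p$ forces $p\mid c_j$ for every $j$, i.e.\ $c_j = pb_j^{(p)}$ with $b_j^{(p)}\in\ZZ$. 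This is exactly where the argument will break for $p=13$, since $13\nmid\tau(13)$.

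For the second assertion I would realize the polynomial as the characteristic polynomial of the conjugates of $h^{(p)}$. Writing $pU_p\phi^{(p)}(z) = \sum_{k=0}^{p-1}\phi^{(p)}\paren{\frac{z+k}{p}}$ from the definition of $U_p$, the natural roots are $r_k(z) = h^{(p)}(z+k) = p^{12/(p-1)}\phi^{(p)}\paren{\frac{z+k}{p}}$ for $k=0,\dots,p-1$, and one checks via Lemma~\ref{phipsi} that $h^{(p)}(z)=\psi^{(p)}(-1/z)$. The map $z\mapsto z+1$ permutes the $r_k$ cyclically, so their elementary symmetric functions $e_j$ are invariant under $z\mapsto z+1$ and, as symmetric functions of the conjugates, are modular functions on $\Gamma_0(p)$ holomorphic away from $0$; hence each $e_j$ is a polynomial in $\phi^{(p)}$. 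The power sums are computed directly as $\sum_k r_k^m = p^{12m/(p-1)}\cdot pU_p(\phi^{(p)})^m = p^{12m/(p-1)+1}U_p(\phi^{(p)})^m$, and the same pole analysis as above (now with bound $pm$) shows each $U_p(\phi^{(p)})^m$ is a polynomial in $\phi^{(p)}$; Newton's identities then convert these into the $e_j$. Matching $e_1 = \sum_k r_k = p^{12/(p-1)+1}U_p\phi^{(p)} = p^{12/(p-1)+2}\sum_\ell b_\ell^{(p)}(\phi^{(p)})^\ell$ against the stated $g_1$, and using the monic relation $\prod_{k}(X-r_k) = X^p + \sum_{j=1}^p(-1)^j e_j X^{p-j}$, identifies the asserted equation $(h^{(p)})^p + \sum_j(-1)^j g_j (h^{(p)})^{p-j}=0$ with $g_j = e_j$.

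The hard part will be pinning down the exact closed form $g_j = (-1)^{j+1}p^{12/(p-1)+2}\sum_{\ell=j}^p b_\ell^{(p)}(\phi^{(p)})^{\ell-j+1}$ rather than merely ``some polynomial in $\phi^{(p)}$.'' Tracking the precise power of $p$ in each $e_j$ is delicate because the integers $b_\ell^{(p)}$ themselves carry large powers of $p$, so the naive $q$-order and the $p$-adic valuations of the Fourier coefficients interact; a clean derivation probably replaces the term-by-term Newton computation with a generating-function form of Lemma~\ref{fricke}, applied simultaneously to all powers $(\phi^{(p)})^m$, that produces the resolvent in one stroke. Verifying rigorously that the $r_k$ are genuinely permuted by $\Gamma_0(p)$ and that the resulting symmetric functions have the claimed integrality is the remaining technical point.
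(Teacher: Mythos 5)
First, note that the paper itself does not prove this lemma: it is quoted verbatim from Lehner \cite[Theorem~2]{lehner2}, so there is no internal proof to compare against. Your argument for the first assertion is essentially complete and is a nice self-contained route: the Fricke/pole analysis gives $U_p\phi^{(p)}=\sum_{j=1}^{p}c_j(\phi^{(p)})^j$ with $c_j\in\ZZ$ (the unipotent triangular system in the $q$-expansion forces integrality), and the congruence $\phi^{(p)}\equiv\Delta\pmod p$ together with $p\mid\tau(p)$ for $p\in\{3,5,7\}$ and the $\FF_p$-linear independence of the $(\phi^{(p)})^j$ forces $p\mid c_j$. This is correct, and it also correctly isolates why the method fails at $p=13$, consistent with Section 4.

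The genuine gap is in the second assertion. What the lemma claims --- and what the paper actually uses --- is the exact closed form $g_j=(-1)^{j+1}p^{12/(p-1)+2}\sum_{\ell=j}^{p}b_\ell^{(p)}(\phi^{(p)})^{\ell-j+1}$: the $j$th elementary symmetric function of the roots $h^{(p)}_\ell$ is built from the \emph{same} integers $b_\ell^{(p)}$, with a uniform factor $p^{12/(p-1)+2}$ and a shift of exponents. Your outline establishes only that the $h^{(p)}_\ell$ satisfy a monic degree-$p$ equation whose coefficients are polynomials in $\phi^{(p)}$, and then explicitly defers ``pinning down the exact closed form'' to an unspecified generating-function argument. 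Newton's identities do not deliver it: they express $e_j$ in terms of the power sums $S_m=p^{12m/(p-1)+1}U_p(\phi^{(p)})^m$, and the polynomial coefficients of $U_p(\phi^{(p)})^m$ for $m\ge 2$ are not expressed in terms of the $b_\ell^{(p)}$, so no closed form drops out of that route. This is not a cosmetic omission: the precise powers of $p$ in the $g_j$ (e.g.\ $g_1=10\cdot 3^9\phi+4\cdot 3^{14}\phi^2+3^{18}\phi^3$ for $p=3$) are exactly what drives the base cases and the recursion $S_m=g_1S_{m-1}-g_2S_{m-2}+g_3S_{m-3}$ in the proof of Theorem~\ref{betterbound}, so the part of the lemma you leave unproved is the part the paper depends on quantitatively. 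A complete proof must either verify directly that $X^p+\sum_{j=1}^{p}(-1)^jg_jX^{p-j}$ with the stated $g_j$ annihilates $h^{(p)}(z)$ --- a nontrivial computation (after interchanging the order of the double sum it reduces to the first assertion plus additional input, such as the eta-quotient evaluation of $\prod_{\ell}\phi^{(p)}\paren{\frac{z+\ell}{p}}$) --- or follow Lehner. You should also actually carry out the verification that $\Gamma_0(p)$ permutes the $h^{(p)}_\ell$, which you flag as ``the remaining technical point'' but do not supply; without it the symmetric functions are not yet known to be modular for $\Gamma_0(p)$.
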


In the following lemma, we extend the result from the first part of Lemma~\ref{polyrelation}, writing $U_p(\phi^{(p)})^m$ as an integer polynomial in $\phi^{(p)}$. In particular, we give the degree of that polynomial. An alternative approach can be seen in~\cite[Lemma 4.1.1]{calegari}.

\begin{lemma}
\label{polynomial}
For all $m \geq 1$, $U_p (\phi^{(p)})^m \in \Z[\phi^{(p)}].$ In particular,
\[U_p(\phi^{(p)})^m = \sum_{j = \ceil{m/p}}^{pm} d(m,j)(\phi^{(p)})^j\] where $d(m,j) \in \Z$, and  $d(m,pm)$ is not 0.
\end{lemma}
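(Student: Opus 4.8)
**The plan is to prove both claims—membership in $\Z[\phi^{(p)}]$ and the precise degree bounds $\ceil{m/p} \le j \le pm$ with $d(m,pm)\ne 0$—by analyzing the behavior of $U_p(\phi^{(p)})^m$ at the two cusps $0$ and $\infty$.** First I would establish that $U_p(\phi^{(p)})^m$ lies in $\Z[\phi^{(p)}]$. Since $\phi^{(p)}$ is a Hauptmodul with a pole only at $0$, any level-$p$ modular function that is holomorphic on the upper half plane and holomorphic at $\infty$ is a polynomial in $\phi^{(p)}$; and if its Fourier coefficients at $\infty$ are integers, that polynomial has integer coefficients. The function $(\phi^{(p)})^m$ has integer Fourier coefficients (indeed $\phi^{(p)}$ does, being an eta-quotient expansion), and $U_p$ preserves integrality of Fourier coefficients since it simply extracts the coefficients $a(pn)$. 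So the key point reduces to checking that $U_p(\phi^{(p)})^m$ is holomorphic at $\infty$ and has no poles in the interior—holomorphy in the interior is automatic as $U_p$ maps modular functions to modular functions, so the real content is the cusp analysis, which also delivers the degree bounds.

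Next I would pin down the order of vanishing at $\infty$. Since $\phi^{(p)} = q + \cdots$ vanishes to order $1$ at $\infty$, the form $(\phi^{(p)})^m = q^m + \cdots$ vanishes to order $m$. Applying $U_p$ sends the $q$-expansion $\sum_{n\ge m} a(n)q^n$ to $\sum a(pn)q^n$, whose lowest surviving term has index $n = \ceil{m/p}$ (the smallest integer $n$ with $pn \ge m$). Thus $U_p(\phi^{(p)})^m$ vanishes to order exactly $\ceil{m/p}$ at $\infty$, which forces the lowest-degree term in the polynomial to be $(\phi^{(p)})^{\ceil{m/p}}$ and explains the lower index of summation.

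For the upper bound and the nonvanishing of $d(m,pm)$, I would use the Fricke involution to transfer the analysis to the cusp at $0$, where $\phi^{(p)}$ has its pole. The degree of the polynomial in $\phi^{(p)}$ equals the order of the pole of $U_p(\phi^{(p)})^m$ at $0$, since $\phi^{(p)}$ has a simple pole there. Applying Lemma~\ref{fricke} with $f = (\phi^{(p)})^m$ and evaluating the pole order at $0$ via Lemma~\ref{phipsi}—which exchanges $\phi^{(p)}$ and $\psi^{(p)} = 1/\phi^{(p)}$ under $z \mapsto -1/(pz)$—lets me read off the principal part at $0$ from the known $q$-expansions of $(\phi^{(p)})^m$ at $\infty$. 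The term $f(-1/(p^2 z))$ contributes the dominant pole: since $\psi^{(p)}$ has a simple pole at $\infty$, the quantity $(\phi^{(p)})^m$ evaluated at the doubly-Fricke-transformed point picks up a pole of order $pm$, giving degree $pm$ with a nonzero leading coefficient.

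**The main obstacle will be the careful bookkeeping in the Fricke computation**—tracking the powers of $p$ coming from the scaling factors $p^{\pm 12/(p-1)}$ in Lemma~\ref{phipsi} and verifying that the leading coefficient $d(m,pm)$ survives as a nonzero integer rather than being cancelled by the $-f(z)$ term or the $p(U_pf)(pz)$ term in Lemma~\ref{fricke}. Concretely, I expect $f(z) = (\phi^{(p)})^m$ to contribute only a pole of order $m < pm$ at $0$ and $(U_pf)(pz)$ to contribute a pole of order at most $m$ as well (its $q$-expansion at $0$ being governed by the already-computed behavior at $\infty$), so that the order-$pm$ pole from $f(-1/(p^2z))$ is uncancelled; confirming this dominance and the exact leading coefficient is where the proof must be done with care.
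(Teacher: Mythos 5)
Your proposal is correct and follows essentially the same route as the paper: both use Lemma~\ref{fricke} together with Lemma~\ref{phipsi} to show the pole at the cusp $0$ has order exactly $pm$ (coming from the uncancelled $f(-1/p^2z)$ term), obtain the lower index $\ceil{m/p}$ from the $q$-expansion at $\infty$, and deduce integrality of the coefficients from the integrality of the Fourier coefficients of $\phi^{(p)}$. The only quibble is that the order of vanishing at $\infty$ is exactly $\ceil{m/p}$ only when $p\mid m$; in general one gets ``at least,'' which is all the lemma asserts.
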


\begin{proof}
We proceed as in~\cite[Lemma 5]{jenkins}; this is a straightforward generalization from $2$ to $p$. Using Lemmas~\ref{fricke} and~\ref{phipsi}, we have that

\begin{align*}
U_p\phi^{(p)}(-1/pz)^m
&= U_p\phi^{(p)}(pz)^m + p\inv\phi^{(p)}(-1/p^2z)^m-p\inv \phi^{(p)}(z)^m \\
&= U_p\phi^{(p)}(pz)^m + p^{-1-12m/(p - 1)} \psi^{(p)}(pz)^m - p\inv \phi^{(p)}(z)^m\\
&= p^{-1-12m/(p - 1)}q^{-pm} + O(q^{-pm+p}).
\end{align*}
Thus,
\begin{align*}
p^{1+12m/(p - 1)}U_p\phi^{(p)}(-1/pz)^m &= q^{-pm} + O(q^{-pm+p}).
\end{align*}
Because $\phi^{(p)}(z)^m$ is holomorphic at $\infty,$ $U_p \phi^{(p)}(z)^m$ is holomorphic at $\infty$. So $U_p \phi(-1/pz)^m$ is holomorphic at 0 and, since the Fourier expansion starts with $q^{-pm}$, it must be a polynomial of degree $pm$ in $\psi^{(p)}(z).$ Let $b(m,j) \in \Z$ such that
\[
p^{1+12m/(p - 1)}U_p\phi^{(p)}(-1/pz)^m = \sum_{j = 0}^{pm} b(m,j)\psi^{(p)}(z)^j,
\]
and we note that $b(m, pm)$ is not 0. Now replace $z$ with $-1/pz$ and use Lemma~\ref{phipsi} to get
\[
p^{1+12m/(p - 1)}U_p\phi^{(p)}(z)^m = \sum_{j = 0}^{pm} b(m,j)p^{12j/(p - 1)}\phi^{(p)}(z)^j,
\]
which gives
\[
U_p\left(\phi^{(p)}(z)^m\right) = \sum_{j = 0}^{pm} b(m,j)p^{12(j-m)/(p - 1) - 1}\phi^{(p)}(z)^j.
\]
Because $\left(\phi^{(p)}(z)\right)^m = q^m + \cdots$, if $m$ is divisible by $p$, the leading term of the above sum is $q^{m/p}$, and otherwise the smallest power of $q$ present in the polynomial is at least $\ceil{m/p}$, so the sum starts with $j = \ceil{m/p}$ as desired. Notice that $b(m,j)p^{12(j-m)/(p - 1)-1}$ is an integer because the coefficients of $\phi^{(p)}(z)^m$ are integers.
\end{proof}

We may repeatedly use Lemma~\ref{polynomial} to write $U_p^\alpha (\phi^{(p)})^m$ as a polynomial in $\phi^{(p)}$. Let
\begin{equation}
\label{fdefn}
f_{(p)}(\ell) = \ceil{\ell/p},\ f^0_{(p)}(\ell) = \ell, \text{ and } f^\alpha_{(p)}(\ell) = f_{(p)}(f^{\alpha-1}_{(p)}(\ell)) \qquad \text{for } \alpha \geq 1.
\end{equation}
Using Lemma~\ref{polynomial}, the smallest exponent of $q$ appearing in $U_p^\alpha (\phi^{(p)})^m$ is at least $f_{(p)}^\alpha(m).$

Lemma~\ref{binarygammma} provides a connection between $\gamma_p(m,\alpha)$ and the integers $f_{(p)}^\alpha(m)$: $\gamma_p$ is counting the number of $0$s and $1$s to the left of the first nonzero digit in the base $p$ expansion of $m$. The key difference between the following lemma and its corresponding lemma in~\cite{jenkins} is that there are more digits in bases $3, 5, 7$.
\begin{lemma}
\label{binarygammma}
The number of $0$s to the left of the rightmost nonzero digit in the first $\alpha$ digits of the base $p$ expansion of $m$ is equal to the number of integers congruent to $1$ modulo $p$ in the list
\[m, f_{(p)}(m), f_{(p)}^2(m), \dots, f_{(p)}^{\alpha -1}(m), \]
except when the rightmost nonzero digit is $1$ (in which case there is exactly one more in the list). Similarly, the number of $1$s to the left of the rightmost nonzero digit in the base $p$ expansion of $m$ is equal to the number of integers congruent to $2$ modulo $p$ in the above list, again with the exception of when the rightmost nonzero digit is $2$.
\end{lemma}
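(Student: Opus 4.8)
The plan is to linearize the iteration defining $f_{(p)}^\alpha$ by shifting the whole sequence down by $1$. The crucial observation is that $f_{(p)}(\ell) = \ceil{\ell/p} = \lfloor (\ell-1)/p\rfloor + 1$ for every positive integer $\ell$. Hence if I set $n_j = f_{(p)}^j(m) - 1$, the recurrence collapses to the plain ``drop the last base-$p$ digit'' map $n_{j+1} = \lfloor n_j/p\rfloor$, so that $n_j = \lfloor (m-1)/p^j\rfloor$ and therefore
\[ f_{(p)}^j(m) = \left\lfloor \frac{m-1}{p^j}\right\rfloor + 1. \]
First I would establish this closed form by induction on $j$, which replaces the iterated ceiling by a single floor and makes the residues transparent.

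Next, write the base-$p$ digits of $m-1$ as $c_1, c_2, \dots$. Since $\lfloor (m-1)/p^j\rfloor \equiv c_{j+1} \pmod p$, the closed form gives $f_{(p)}^j(m) \equiv c_{j+1} + 1 \pmod p$. Consequently the term $f_{(p)}^j(m)$ is $\equiv 1 \pmod p$ exactly when $c_{j+1} = 0$, and $\equiv 2 \pmod p$ exactly when $c_{j+1} = 1$. Thus counting the entries $\equiv 1$ (respectively $\equiv 2$) among $f_{(p)}^0(m), \dots, f_{(p)}^{\alpha-1}(m)$ reduces to counting the digits $c_1, \dots, c_\alpha$ of $m-1$ that equal $0$ (respectively $1$), and it remains only to translate these counts back to the digits $a_i$ of $m$.

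For this I would compare the base-$p$ expansions of $m$ and $m-1$. With $i'$ the index of the rightmost nonzero digit of $m$ (so $a_1 = \cdots = a_{i'-1} = 0$), subtracting $1$ propagates a borrow through the trailing zeros, giving $c_i = p-1$ for $i < i'$, $c_{i'} = a_{i'} - 1$, and $c_i = a_i$ for $i > i'$. Because $p \geq 3$ we have $p - 1 \notin \set{0,1}$, so the borrowed digits contribute nothing to either count; the digits above $i'$ are unchanged, so their zeros and ones reproduce exactly the digits $a_i$ with $i' < i \leq \alpha$ that the lemma counts; and the digit in position $i'$ contributes one extra $0$ precisely when $a_{i'} = 1$ and one extra $1$ precisely when $a_{i'} = 2$. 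This yields the two stated equalities together with their exceptional cases. The argument is essentially routine bookkeeping, and the only points demanding care are keeping the index shift between $f_{(p)}^j$ and $c_{j+1}$ straight and separately checking the degenerate case $a_1 = \cdots = a_\alpha = 0$ (no nonzero digit among the first $\alpha$), where every listed term is divisible by $p$ and both counts are $0$, matching the ``otherwise'' convention.
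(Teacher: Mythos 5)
Your proof is correct, and it takes a genuinely different---and arguably cleaner---route than the paper's. The paper argues dynamically: it describes what a single application of $f_{(p)}$ does to the base-$p$ digits (strip a trailing zero when $p \mid m$; otherwise delete the last digit and propagate a carry leftward, turning the next digit that is a $0$ or $1$ into a $1$ or $2$) and then iterates that description, which is where the informal bookkeeping lives. You instead linearize the whole iteration at once via $\ceil{\ell/p} = \lfloor(\ell-1)/p\rfloor + 1$, getting the closed form $f_{(p)}^j(m) = \lfloor (m-1)/p^j\rfloor + 1$ and hence $f_{(p)}^j(m) \equiv c_{j+1} + 1 \pmod p$ with $c_i$ the digits of $m-1$; the lemma then reduces to a single comparison of the digits of $m$ and $m-1$, where the borrow through the trailing zeros produces digits $p-1 \notin \set{0,1}$ (this is exactly where $p \geq 3$ is needed, and why the $p=2$ analogue in the earlier paper looks different) and the digit $a_{i'}$ drops to $a_{i'}-1$, which accounts precisely for the two exceptional cases in the statement. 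What your approach buys is that every residue in the list is visible simultaneously rather than tracked step by step, so there is no need to argue that the carry interacts correctly with subsequent iterations; the paper's version keeps the digit dynamics explicit, which matches the intuition the authors invoke elsewhere. The only point to make explicit in a write-up is that the identity $\ceil{\ell/p}=\lfloor(\ell-1)/p\rfloor+1$ is applied only to positive integers, which holds for every term of the sequence since $m \geq 1$; you have implicitly done this, and your separate check of the all-zero case is consistent with the $i'=-1$ convention in Theorem~\ref{congruence}.
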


\begin{proof}
Write the base $p$ expansion of $m$ as $a_r\dots a_2 a_1$, and consider its first $\alpha$ digits, $a_\alpha \dots a_2 a_1$, where $a_i = 0$ for $i > r$ if $\alpha > r.$ If $a_i = 0$ for $1 \leq i \leq \alpha$, then all of the integers in the list are zero modulo $p$. Otherwise, suppose that $a_i = 0$ for $1 \leq i < i'$ and $a_{i'} \neq 0.$ Apply $f_{(p)}$ repeatedly to $m$. Each application of $f_{(p)}$ deletes the rightmost $0$ from the expansion, until $a_{i'}$ is the rightmost remaining digit; that is, $f_{(p)}^{i'-1}(m) = a_\alpha \dots a_{i'-1}a_{i'}.$ In particular, the rightmost digit is nonzero. Having reduced to this case, we now treat only the case where $m$ is not divisible by $p$.

If $m$ is not divisible by $p$, and $a_1 \in \{1, 2\}$, then at least one number in the list, namely $m$, is congruent to either $1$ or $2$ modulo $p$. Also, $f_{(p)}(m) = \ceil{m/p} = (m+a)/p$ for some $a \neq 0$. Applied to the base $p$ expansion of $m$, $f_{(p)}$ deletes $a_1$ and propagates a $1$ leftward through the base $p$ expansion, replacing any digit that was previously equal to $p-1$ with zero. This is essentially the operation of carrying in addition. This process then terminates upon encountering the rightmost digit less than $p - 1$ (if it exists), which becomes one greater. As in the case where $p$ divides $m$, we apply $f$ repeatedly to delete the new leading 0s. But if the first nonzero digit to the left was either a $0$ or a $1$ before we propagated a $1$ leftward, it is now a $1$ or a $2$ respectively. So now when we repeat this process until all digits are accounted for, we notice that any digit that was either a $0$ or a $1$ becomes a $1$ or a $2$ respectively (with the exception of the first nonzero digit), which proves the lemma.
\end{proof}

\section{Proof of the Main Theorem}

Theorem~\ref{congruence} will follow from the next theorem.

\begin{restatable}{theorem}{betterbound}
\label{betterbound}
Let $f_{(p)}^\alpha(m)$ be as in~(\ref{fdefn}). Let $\gamma_p(m,\alpha)$ be as in Theorem~\ref{congruence}, and let $\alpha \geq 1$. Define $P^{(p)}(\ell, a)$ to be the set of polynomials in $\phi^{(p)}$ with lowest power $\ell$ having coefficient $d_\ell$ divisible by $p^a$ and each subsequent coefficient $d_k$ being divisible by at least $p^{\delta_p(k - \ell) + a}$, where $\delta_3 = 4$ and $\delta_5 = \delta_7 = 1$. Then
\begin{equation}
\label{goodresult}
U_p^\alpha (\phi^{(p)})^m \in P^{(p)}(f_{(p)}^{\alpha}(m), \gamma_p(m, \alpha)).
\end{equation}
\end{restatable}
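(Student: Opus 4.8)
My plan is to induct on $\alpha$, reducing the whole statement to a single application of $U_p$. Write $v_p$ for the $p$-adic valuation and set $c=12/(p-1)$, so that $c=6,3,2$ for $p=3,5,7$. I would first isolate the base case as a statement about a pure power, which I label $(A)$: for every $k\ge 1$,
\[ U_p(\phi^{(p)})^k \in P^{(p)}\paren{\ceil{k/p},\ \gamma_p(k,1)}. \]
Granting $(A)$, the inductive step is a closure lemma: if $g\in P^{(p)}(\ell,a)$ then $U_p g\in P^{(p)}\paren{\ceil{\ell/p},\ a+\gamma_p(\ell,1)}$. Since $(\phi^{(p)})^m$ is the monomial lying in $P^{(p)}(m,0)$ and $\ceil{\cdot/p}=f_{(p)}$, applying the closure lemma $\alpha$ times gives
\[ U_p^\alpha(\phi^{(p)})^m \in P^{(p)}\paren{f_{(p)}^{\alpha}(m),\ \sum_{i=0}^{\alpha-1}\gamma_p\paren{f_{(p)}^{i}(m),1}}, \]
and the theorem follows once the accumulated exponent is identified with $\gamma_p(m,\alpha)$.

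The identification $\gamma_p(m,\alpha)=\sum_{i=0}^{\alpha-1}\gamma_p\paren{f_{(p)}^{i}(m),1}$ is where Lemma~\ref{binarygammma} enters. The key observation is that $\gamma_p(k,1)$ depends only on $k\bmod p$: for $p=3$ it equals $2,1,0$ according as $k\equiv 1,2,0\pmod 3$, and for $p=5,7$ it equals $1$ when $k\equiv 1,2$ and $0$ otherwise. Thus the sum counts how many of the iterates $m,f_{(p)}(m),\dots,f_{(p)}^{\alpha-1}(m)$ are congruent to $1$ and to $2$ modulo $p$, with weights $(2,1)$ when $p=3$ and $(1,1)$ when $p=5,7$. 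Lemma~\ref{binarygammma} rewrites these two counts as the number of $0$s and the number of $1$s to the left of the rightmost nonzero digit among the first $\alpha$ base-$p$ digits of $m$, up to the stated correction of $+1$ when $a_{i'}\in\{1,2\}$. Substituting these and absorbing the correction recovers the closed forms in Theorem~\ref{congruence}; for instance when $p=3$ the leftover weight $2\chi_{\{1\}}(a_{i'})+\chi_{\{2\}}(a_{i'})$ is exactly $3-a_{i'}$.

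For the closure lemma I would expand $U_p g=\sum_k d_k\,U_p(\phi^{(p)})^k$ and collect the coefficient of a fixed power $(\phi^{(p)})^J$; the lowest power produced is $\ceil{\ell/p}=f_{(p)}(\ell)$, coming from $k=\ell$. Each contribution is $d_k$ times a coefficient of $U_p(\phi^{(p)})^k$ whose valuation is bounded below by $(A)$, so by the ultrametric inequality it suffices to prove, for all $k\ge\ell$,
\[ \delta_p(k-\ell)+\delta_p\paren{\ceil{\ell/p}-\ceil{k/p}}+\gamma_p(k,1)\ \ge\ \gamma_p(\ell,1), \]
which I call $(\star)$. A finite check in the residues of $\ell$ and of $k-\ell$ modulo $p$, together with a monotonicity argument for large $k-\ell$, verifies $(\star)$; it is an equality at $k=\ell$, which is exactly what forces the new exponent to be $a+\gamma_p(\ell,1)$ and not something larger. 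The values $\delta_3=4$ and $\delta_5=\delta_7=1$ are precisely those for which $(\star)$ holds while $(A)$ remains true.

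The real work, and the step I expect to be the main obstacle, is the base case $(A)$. Lemma~\ref{polynomial} writes $U_p(\phi^{(p)})^k=\sum_{j\ge\ceil{k/p}} b(k,j)\,p^{\,c(j-k)-1}(\phi^{(p)})^j$ with $b(k,j)\in\ZZ$, and for $j$ sufficiently large relative to $k$ the explicit power $p^{\,c(j-k)-1}$ already beats the required bound, since the slope $c$ exceeds $\delta_p$. But for the low-order terms, and in particular at the leading power $j=\ceil{k/p}$ where $c(j-k)-1$ is very negative, Lemma~\ref{polynomial} gives nothing beyond integrality, and the inequality $v_p(d(k,j))\ge\delta_p(j-\ceil{k/p})+\gamma_p(k,1)$ forces genuine divisibility of the integers $b(k,j)$ by powers of $p$. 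I would extract this from the arithmetic that governs $U_p(\phi^{(p)})^k$ --- the degree-$p$ relation satisfied by $h^{(p)}$ in Lemma~\ref{polyrelation} together with Newton's identities for the power sums of its roots --- running the estimate by induction on $k$ and case by case in $p\in\{3,5,7\}$ and in $k\bmod p$. Producing the leading exponent $\gamma_p(k,1)$ in the face of a negative explicit power of $p$ is the crux of the argument.
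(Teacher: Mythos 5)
Your proposal is correct and follows essentially the same route as the paper: the same decomposition into a base case for a single application of $U_p$ (your $(A)$, proved via the power sums of the roots of the degree-$p$ relation in Lemma~\ref{polyrelation} and Newton's identities, by induction on $k$), the same closure step for polynomials (your inequality $(\star)$ is exactly the telescoped form of the paper's chain of containments $P^{(3)}(\lceil m/3\rceil, c_m+4(m-i))\subseteq P^{(3)}(\lceil (m-1)/3\rceil, c_{m-1}+4(m-1-i))$), and the same identification of the accumulated exponent with $\gamma_p(m,\alpha)$ via Lemma~\ref{binarygammma}. The one place you stop at a sketch, the divisibility of the low-order coefficients in $(A)$, is precisely where the paper does its computation, using the explicit $g_j$ for the base cases $k=1,2,3$ and an ``$R^{(p)}$ product lemma'' (multiplying two admissible polynomials costs $\delta_p$ powers of $p$) to run the induction through the recursion $S_k=g_1S_{k-1}-g_2S_{k-2}+g_3S_{k-3}$.
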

Since every coefficient of such a polynomial is divisible by $p^{\gamma_p(m, \alpha)}$, this shows that the coefficient of $q^{p^\alpha n}$ in $(\phi^{(p)})^m$ must vanish modulo $p^{\gamma_p(m, \alpha)}$.  Note that these methods do not give meaningful congruences for the case when $\alpha = 0$.

Theorem~\ref{betterbound} is an improvement on the following result by Lehner~\cite{lehner2} for $p = 3$.
\begin{theorem}
\label{lehnerbound}
\cite[Equation~3.24]{lehner2}
Write $U_3 ^\alpha (\phi^{(3)})^m$ as  $\sum d(m,j,\alpha)(\phi^{(3)})^j \in \Z[\phi^{(3)}].$ For any integer $k$, let $\nu_3(k)$ be the highest power of $3$ dividing $k$. Then
\begin{align*}
    \nu_3(d(m,j,\alpha)) \geq 4(j-1) + \alpha(2 - 4(1 - m)).
\end{align*}
\end{theorem}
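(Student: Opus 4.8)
The plan is to prove the estimate by induction on $\alpha$, reducing everything to a single application of $U_3$ together with the explicit shape of $U_3(\phi^{(3)})^m$ from Lemma~\ref{polynomial}. The engine is the composition law coming from $U_3^{\alpha+1}=U_3\circ U_3^{\alpha}$: writing $U_3^{\alpha}(\phi^{(3)})^m=\sum_j d(m,j,\alpha)(\phi^{(3)})^j$ and applying $U_3$ to each power via Lemma~\ref{polynomial} gives the recursion
\[ d(m,i,\alpha+1)=\sum_{j} d(m,j,\alpha)\,d(j,i,1), \]
so a clean one-step estimate propagates to all $\alpha$ once the bookkeeping is controlled. A preliminary step is to fix the normalization: Lehner's bound is most naturally read through the Fricke involution of Lemma~\ref{phipsi}, which exchanges $\phi^{(3)}$ and $\psi^{(3)}$ and reinterprets the exponent $j$ as an order of vanishing, and it is in this normalization that the affine shape $4(j-1)+\alpha(4m-2)$ appears.

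For the base case $\alpha=1$, Lemma~\ref{polynomial} gives $d(m,j,1)=b(m,j)\,3^{6(j-m)-1}$ with $b(m,j)\in\Z$, so the task is to pin down $\nu_3(b(m,j))$. I would obtain these valuations from the modular equation of Lemma~\ref{polyrelation}: the three conjugates $3^{6}\phi^{(3)}((z+k)/3)$, $k=0,1,2$, are the roots of the monic cubic in $h^{(3)}$ whose coefficients $g_j$ are divisible by explicit powers of $3$ (the factor $3^{12/(p-1)+2}=3^{8}$ times the integers $b_\ell^{(3)}$). Newton's identities then express the power sum $\sum_k\bigl(3^{6}\phi^{(3)}((z+k)/3)\bigr)^m=3^{1+6m}\,U_3(\phi^{(3)})^m$ as a polynomial in $g_1,g_2,g_3$, from which one reads off that $\nu_3(d(m,j,1))$ grows with slope at least $\delta_3=4$ in $j$. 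For instance, for $m=1$ one finds $b_1^{(3)}=30$, $b_2^{(3)}=2^2 3^6$, $b_3^{(3)}=3^{10}$, so $\nu_3(d(1,j,1))=2,7,11$, matching $4(j-1)+2$ and illustrating the slope-$4$ profile that must be isolated in general.

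For the inductive step I would substitute the inductive hypothesis and the base case into the recursion and take the minimal valuation over $j$. The hard part, and the main obstacle, is that this convolution does \emph{not} close under a naive term-by-term estimate: the smallest surviving powers (those near $f_{(3)}^{\alpha}(m)$) carry a base valuation too small for the single exponent $4(j-1)+\alpha(4m-2)$ to be subadditive across one application of $U_3$, so the argument must track the full slope-$4$ divisibility profile of the coefficient vector rather than one number at a time. This is exactly the role of the classes $P^{(3)}(\ell,a)$ and the slope $\delta_3=4$ in Theorem~\ref{betterbound}: one shows that $U_3$ carries such a class into another of the same type, moving the lower endpoint from $\ell$ to $f_{(3)}(\ell)$ and raising the base exponent, and Lehner's coarser estimate then follows by specialization, discarding the sharp anchoring at $f_{(3)}^{\alpha}(m)$ in favor of the weaker anchoring at $j=1$. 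Verifying that the structure-constant divisibilities from the base case are strong enough to make this self-propagation go through is precisely the crux, and is the reason the paper proves the cleaner Theorem~\ref{betterbound} directly and records Lehner's inequality only as a benchmark.
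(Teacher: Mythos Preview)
The paper does not prove this statement at all: Theorem~\ref{lehnerbound} is quoted verbatim as \cite[Equation~3.24]{lehner2} and is included only as a historical benchmark against which the paper's own Theorem~\ref{betterbound} is an improvement. There is therefore no ``paper's own proof'' to compare your proposal against. You in fact recognize this in your final paragraph, where you say the paper ``proves the cleaner Theorem~\ref{betterbound} directly and records Lehner's inequality only as a benchmark.''

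On the substance of your proposal: what you have written is not an independent argument for Lehner's bound but a summary of the paper's proof of Theorem~\ref{betterbound} (Newton's identities on the modular equation for the base step, then propagation via the classes $P^{(3)}(\ell,a)$), followed by the assertion that Lehner's inequality ``follows by specialization.'' That last step is never carried out, and in fact it cannot be carried out from Theorem~\ref{betterbound} as stated. For $m=2$, $\alpha=1$, $j=1$ the inequality in Theorem~\ref{lehnerbound} reads $\nu_3(d(2,1,1))\ge 4(0)+1\cdot(2-4(1-2))=6$, whereas the paper's own $\alpha=1$ computation gives $U_3\phi^2=24\,\phi+\cdots$, so $d(2,1,1)=24$ and $\nu_3(d(2,1,1))=1$. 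Thus the inequality, as transcribed, is already false at this point, and Theorem~\ref{betterbound} certainly does not imply it. This is consistent with the paper's own remark that Lehner's bound ``sometimes only gives the trivial result that the $3$-adic valuation \ldots\ is greater than some negative integer,'' which cannot happen for $m,j,\alpha\ge 1$ under the formula as printed; the cited equation evidently carries a different normalization or range of parameters in Lehner's original. Since the theorem is a citation rather than a result of this paper, resolving that discrepancy is a matter for \cite{lehner2}, not for your argument or the paper's.
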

In particular, Lehner's bound sometimes only gives the trivial result that the 3-adic valuation of $d(m,j,\alpha)$ is greater than some negative integer. Lehner also proved congruences for $p = 5, 7$, but they experience similar issues~\cite{lehner1}.

In proving Theorem~\ref{betterbound}, we use techniques from Lemmas~5 and~6 of~\cite{andersen}. From the definition of $U_p$, we have
\begin{equation}
\label{defUp}
    U_p\phi^{(p)}(z)^m = p\inv \sum_{j = 1}^p \phi^{(p)}\left(\dfrac{z + j}{p}\right)^m.
\end{equation}
Define $h^{(p)}_\ell(z) = p^{12/(p - 1)}\phi^{(p)}\paren{\frac{z+\ell}{p}}.$  Consider the polynomials
\[
F^{(p)}(x) = x^p + \sum_{j = 1}^{p - 1}(-1)^jg_j(z)x^{p - j}
\]
with the $g_j(z)$ functions from Lemma~\ref{polyrelation} as coefficients.  We claim that $F^{(p)}(x)$ has the $h^{(p)}_\ell(z)$ as roots. By Lemma~\ref{polyrelation}, we know that $h^{(p)}_0(z)$ is a root; the others are roots because the $g_j(z)$ are fixed under $z \mapsto z + 1$, but for this transformation of $z$, $h^{(p)}_\ell(z)$ gets sent to $h^{(p)}_{\ell + 1}(z)$.

Since $F^{(p)}$ has the functions $h^{(p)}_\ell(z)$ as its roots, the coefficients $g_j(z)$ are the symmetric polynomials in the roots. We can now use Newton's identities for the sums of powers of roots of a polynomial. Writing $F^{(p)}(x) = \prod_{i = 1}^n (x - x_i),$ let $S_\ell = x_1^\ell + \cdots + x_n^\ell$.  It follows that
\[
S_\ell = g_1S_{\ell-1}-g_2S_{\ell-2}+\cdots+(-1)^{\ell+1}\ell g_\ell.
\]

Let $R^{(p)}$ be the set of polynomials of the form $\sum_{n=1}^N d_n \phi^{(p)}(z)^n$, where $d_n \in \ZZ$ and where for $n \geq 2$, $\nu_p(d_n) \geq \delta_p (n-1)$, where $\delta_p$ is as in Theorem~\ref{betterbound}.  We will need the following \emph{$R^{(p)}$ product lemma}, which implies that when multiplying two polynomials in $R^{(p)}$, we must multiply in $\delta_p$ extra copies of $p$ for the product to be in $R^{(p)}$.
\begin{lemma}
($R^{(p)}$ product lemma) If $f, g \in R^{(p)}$, then $p^{\delta_p}fg \in R^{(p)}$.
\end{lemma}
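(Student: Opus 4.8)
The plan is to expand the product $fg$, collect terms according to the power of $\phi^{(p)}$, and track $p$-adic valuations, using only that $\nu_p$ is additive on products and superadditive on sums (that is, $\nu_p(xy) = \nu_p(x) + \nu_p(y)$ and $\nu_p(x + y) \geq \min(\nu_p(x), \nu_p(y))$). First I would write $f = \sum_{a = 1}^{A} c_a (\phi^{(p)})^a$ and $g = \sum_{b = 1}^{B} e_b (\phi^{(p)})^b$ with $c_a, e_b \in \ZZ$, $\nu_p(c_a) \geq \delta_p(a - 1)$, and $\nu_p(e_b) \geq \delta_p(b - 1)$. The essential structural observation is that elements of $R^{(p)}$ have no constant term, so both $f$ and $g$ are divisible by $\phi^{(p)}$ and their product begins at the $\phi^{(p)}$-degree $2$ term.

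Multiplying and grouping by total degree $n = a + b$ gives
\[
fg = \sum_{n = 2}^{A + B} \Bigl(\sum_{\substack{a + b = n \\ a, b \geq 1}} c_a e_b\Bigr) (\phi^{(p)})^n.
\]
Each summand in the inner sum satisfies $\nu_p(c_a e_b) = \nu_p(c_a) + \nu_p(e_b) \geq \delta_p(a - 1) + \delta_p(b - 1) = \delta_p(n - 2)$, so by superadditivity the coefficient of $(\phi^{(p)})^n$ in $fg$ has valuation at least $\delta_p(n - 2)$. Multiplying by $p^{\delta_p}$ raises every coefficient's valuation by exactly $\delta_p$, yielding valuation at least $\delta_p(n - 1)$ for the coefficient of $(\phi^{(p)})^n$, which is precisely the condition defining membership in $R^{(p)}$.

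To finish I would verify the bookkeeping at the boundary: the product has no constant or linear term (the sum runs over $n \geq 2$), the coefficients are integers, and at the lowest degree $n = 2$ the coefficient is $c_1 e_1$, with valuation at least $\delta_p(1 - 1) + \delta_p(1 - 1) = 0$, so after multiplication by $p^{\delta_p}$ it meets the required bound $\delta_p(2 - 1) = \delta_p$. There is no genuine obstacle here; the only point worth stating carefully is \emph{why exactly one factor} $p^{\delta_p}$ is needed, namely that the two lowest-degree factors each contribute valuation $\delta_p(1 - 1) = 0$, leaving the product short by precisely $\delta_p$ at the governing degree, which the single factor $p^{\delta_p}$ compensates uniformly across all degrees.
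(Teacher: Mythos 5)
Your proof is correct and follows essentially the same route as the paper: both arguments reduce to the valuation computation $\delta_p(a-1)+\delta_p(b-1)+\delta_p = \delta_p(a+b-1)$ on products of monomials, the paper invoking linearity to reduce to that case and you expanding the full product and using superadditivity of $\nu_p$ on sums to collect terms of equal degree.
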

\begin{proof}
We only need to prove this for the product
\begin{align*}
    p^{\delta_p}\left(p^{\delta_p(i - 1)}d_i\phi^{(p)}(z)^i\right)\left(p^{\delta_p(j - 1)}d_j'\phi^{(p)}(z)^j\right),
\end{align*}
since then the lemma will hold for the product of any two polynomials by linearity, as the sum of any two elements in $R^{(p)}$ is clearly also an element of $R^{(p)}$. Observe that
\begin{align*}
    p^{\delta_p}\left(p^{\delta_p(i - 1)}d_i\phi^{(p)}(z)^i\right)\left(p^{\delta_p(j - 1)}d_j'\phi^{(p)}(z)^j\right) &= p^{\delta_p(i - 1 + j - 1 + 1)}d_id_j'\phi^{(p)}(z)^{i + j}\\
    &= p^{\delta_p((i + j) - 1)}(d_id_j')\phi^{(p)}(z)^{i + j},
\end{align*}
which is clearly an element of $R^{(p)}$.
\end{proof}

We prove Theorem~\ref{betterbound} by first showing the theorem holds when $\alpha = 1$.  This is comparable to Lemma 6 from~\cite{andersen}, which gives a subring of $\Z[\phi]$ which is closed under the $U_p$ operator. Here, we employ a similar technique to prove divisibility properties of the polynomial coefficients in Lemma~\ref{polynomial}. We then show that applying $U_p$ to a polynomial in the set $P^{(p)}(f_{(p)}^{\alpha}(m), \gamma_p(m, \alpha))$ will carry it to the set $P^{(p)}(f_{(p)}^{\alpha + 1}(m), \gamma_p(m, \alpha + 1))$, which we refer to as the polynomial step. Implicitly, this proves the result by induction. This structure differs from~\cite{jenkins} because it allows us to prove the polynomial step in a much cleaner way. Another approach to proving the base case can be found in~\cite[Lemma 4.1.1]{calegari}.  In this paper, we will only treat the case $p = 3$, but the computations are similar for $p = 5, 7$. Therefore, we will simply use $\phi$ in place of $\phi^{(3)}$.

Let $\alpha = 1$.  We seek to prove the statement
\[U_p(\phi)^m = \sum_{j = \ceil{m/p}}^{pm} d(m, j)(\phi)^j\]
with
\begin{equation}
\label{alpha1bound}
p^{\delta_p(j - \ceil{m/p}) + c^{(p)}_m} \mid d(m, j)
\end{equation}
where
\begin{align*}
c^{(3)}_m =
\begin{cases}
2 & m \equiv 1 \pmod{3},\\
1 & m \equiv 2 \pmod{3},\\
0 & \text{otherwise,}\end{cases}\\
c^{(5)}_m = c^{(7)}_m =
\begin{cases}
1 & m \equiv 1, 2 \pmod{p},\\
0 & \text{otherwise.}
\end{cases}
\end{align*}
We prove~(\ref{alpha1bound}) by induction on $m$.

Given Lemma~\ref{polynomial} and the statement we are trying to prove, we see that proving~(\ref{alpha1bound}) is equivalent to proving the following three statements:
\begin{align*}
    m \equiv 0 \pmod{3}:&\ \exists\ r \in R^{(3)},\ U_3 \phi^m = 3^{-4(\lceil m/3 \rceil - 1)}r,\\
    m \equiv 1 \pmod{3}:&\ \exists\ r \in R^{(3)},\ U_3 \phi^m = 3^{-4(\lceil m/3 \rceil - 1) + 2}r,\\
    m \equiv 2 \pmod{3}:&\ \exists\ r \in R^{(3)},\ U_3 \phi^m = 3^{-4(\lceil m/3 \rceil - 1) + 1}r.
\end{align*}
Furthermore, by equation~(\ref{defUp}) and considering the functions $h_{\ell}^{(3)}$, we see that $U_3 \phi^m = 3^{-1 - 6m}S_m$. Let $c_m = 0,2,1$ if $m$ is congruent to $0,1,2$ modulo $3$ respectively. Then for the theorem to be true, we require
\begin{equation}
\label{SmMustBe}
    S_m = 3^{6m + 5 - 4 \lceil m/3 \rceil + c_m}r
\end{equation}
for some $r \in R^{(3)}$. We will prove this by induction.

Our base cases are $m = 1, 2, 3$. From Lemma~\ref{polyrelation},
\begin{align*}
    g_1(z) &= 10 \cdot 3^9\phi(z) + 4 \cdot 3^{14}\phi^2(z) + 3^{18}\phi^3(z),\\
    g_2(z) &= -4\cdot 3^{14}\phi(z) - 3^{18}\phi^2(z),\\
    g_3(z) &= 3^{18}\phi(z).
\end{align*}
We find $S_1, S_2,$ and $S_3$ as follows:
\begin{align*}
    S_1 &= g_1 = 3^9(Q_1(\phi)),\\
    S_2 &= g_1S_1 - 2g_2 = 3^{14}(Q_2(\phi)),\\
    S_3 &= g_1S_2 - S_1g_2 + 3g_3 = 3^{19}(Q_3(\phi)),
\end{align*}
where $Q_1, Q_2, Q_3 \in R^{(3)}$. Note that since $g_1$ and $S_1$ each are of the form $3^9 r$ for some $r \in R^{(3)}$, we use the $R^{(p)}$ product lemma to quickly deduce that their product is of the form $3^{18-4}r = 3^{14}r$ for some $r \in R^{(3)}$, from which we easily see $S_2 = 3^{14}(Q_2(\phi))$. Similarly, $3^{23}$ divides both $g_1S_2$ and $S_1g_2$, which means that to keep $g_1S_2$ and $S_1g_2$ in $R^{(p)}$, we lose $3^4$ and get $S_3 = 3^{19}(Q_3(\phi))$ using the same lemma. Comparing with~(\ref{SmMustBe}), we see that the base case is proved.

Suppose that for some $m\geq 4, i \in \{1, 2, 3\}$ we have
\begin{align*}
    S_{m - i} = r_i3^{6(m - i) + 5 - 4 \lceil (m - i)/3 \rceil + c_{m - i}}
\end{align*}
for some $r_i \in R^{(3)}$. We want to show that $S_m = 3^{6m + 5 - 4 \lceil m/3 \rceil + c_m}r_0$ for some $r_0 \in R^{(3)}$. Recall that for $m \geq 4$,
\begin{align*}
    S_m = g_1 S_{m - 1} - g_2S_{m - 2} + g_3S_{m - 3}.
\end{align*}
If each of the terms is of the form $3^{6m + 5 - 4 \lceil m/3 \rceil + c_m}r_0$ for some $r_0 \in R^{(3)}$, then we are done. We will check each term.

Using the $R^{(p)}$ product lemma, the power of $3$ that we get from $g_1 S_{m - 1}$ is
\begin{align*}
    &9 + 6(m - 1) + 5 - 4 \lceil (m - 1)/3 \rceil + c_{m - 1} - 4\\&=6m + 4 - 4 \lceil (m - 1)/3 \rceil + c_{m - 1},
\end{align*}
which we want to be greater than $6m + 5 - 4\lceil m/3 \rceil + c_m$. In other words, we want to check whether
\begin{align*}
    -1 -4\lceil (m - 1)/3 \rceil + c_{m - 1} \geq - 4\lceil m/3 \rceil + c_m,
\end{align*}
i.e.
\begin{equation}
\label{cMcheck}
    -1 + c_{m - 1} \geq - 4(\lceil m/3 \rceil - \lceil (m - 1)/3 \rceil) + c_m.
\end{equation}
Checking the three cases depending on the value of $m$ modulo $3$, we see that~(\ref{cMcheck}) is true.

The power of $3$ that divides $g_2 S_{m - 2}$ is
\begin{align*}
    &14 + 6(m - 2) + 5 - 4 \lceil (m - 2)/3 \rceil + c_{m - 2} - 4\\&= 6m + 3 - 4 \lceil (m - 2)/3 \rceil + c_{m - 2},
\end{align*}
which we want to be greater than $6m + 5 - 4\lceil m/3 \rceil + c_m$. In other words, we want to check whether
\begin{align*}
    -2 + c_{m - 2} &\geq - 4(\lceil m/3 \rceil - \lceil (m - 2)/3 \rceil) + c_m.
\end{align*}
As before, we find that this is true.

In the final term, the power of $3$ that divides $g_3 S_{m - 3}$ is
\begin{align*}
    &18 + 6(m - 3) + 5 - 4 \lceil (m - 3)/3 \rceil + c_{m - 3} - 4\\&= 6m + 1 -4 \lceil (m - 3)/3 \rceil + c_{m - 3},
\end{align*}
which we want to be greater than $6m + 5 - 4\lceil m/3 \rceil + c_m$. In other words, we want to check whether
\begin{align*}
    -4 -4\lceil (m - 3)/3 \rceil + c_{m - 3} &\geq - 4\lceil m/3 \rceil + c_m,
\end{align*}
i.e.
\begin{align*}
    -4 &\geq - 4(\lceil m/3 \rceil - \lceil (m - 3)/3 \rceil).
\end{align*}
As before, this is true, proving the theorem for $\alpha = 1$.

Moving forward to the polynomial step, we have shown that
\[
U_3(\phi(z)^m) = \sum_{n = i}^{3m} d(m, n) (\phi(z))^n,
\]
where $i = \lceil \frac{m}{3} \rceil$, and $3^2 \mid d(m, i)$ if $m \equiv 1 \pmod{3}$, or $3 \mid d(m, i)$ if $m \equiv 2 \pmod{3}$. Furthermore, $\nu_3(d(m, n)) \geq 4(n - i) + c_m$, where $n \neq i$. To take into account the equivalence class of $k$ modulo $3$, we can rewrite this as
\[
U_3(\phi(z)^m) = 3^{c_m} \sum_{n = i}^{3m} d'(m, n) (\phi(z))^n,
\]
where $c_m$ is as above. %Since this is a constant, it factors out of $U_3$ and we treat the polynomial step as follows.

Suppose that
\[
\sum_{n = i}^{j} d_n (\phi(z))^n
\]
is a polynomial where $\nu_3(d_n) \geq 4(n - i)$ for $n \geq i$; without loss of generality, we assume that $3 \nmid d_i$. We will show that
\begin{equation}
U_3\left(\sum_{n = i}^{j} d_n \phi(z)^n\right) = 3^{c_i}\sum_{n = i'}^{3j} d_n' \phi(z)^n, \label{polynomialstep}
\end{equation}
where $d_n' \in \mathbb{Z}$, the quantity $3^{c_i}$ is $3^2, 3,$ or $1$ if $i$ is $1,2, $ or $0$ modulo 3 respectively, and $\nu_3(d_n') \geq 4(n - i') + c_i$ for $n \geq i'$.

We have already shown that $i'$ is at least $\lceil\frac{i}{3} \rceil$ and that the degree of the resulting polynomial is $3j$, so the only question is whether the $3^{c_i}$ term appears and whether the $d_n'$ satisfy the appropriate divisibility by increasing powers of $3$.

We use the sets $P^{(3)}(\ell, a)$ as defined in the statement of Theorem~\ref{betterbound}.  Note first that $U_3(d_i \phi^i)$ is in the set $P^{(3)}(i', c_i)$.
%Note that in a trivial sense, we take $\nu_3(0) = \infty$, so if any coefficient of $(\phi^{(3)})^m$ is zero, it may still be an element of $P^{(3)}(\ell, a)$ as long as the divisibility holds for the nonzero coefficients.
Since $U_3$ is linear over the sum, we find the set to which each $U_3(d_n \phi^n)$ belongs, and then show that each of those sets is contained in $P^{(3)}(i', c_i)$.  It follows that~(\ref{polynomialstep}) holds.  We do this by showing the containments
\begin{align*}
P^{(3)}\left(i', c_i \right) &\supseteq P^{(3)}\left( \left\lceil \frac{i + 1}{3} \right\rceil, c_{i + 1} + 4(i + 1 - i))\right)\\&\supseteq \cdots\\&\supseteq P^{(3)}\left( \left\lceil \frac{j}{3} \right\rceil, c_{j} + 4(j - i)\right).
\end{align*}
In order to show all of these containments, we only need to show consecutive set inclusions, or that for each $m$, $P^{(3)}(\lceil \frac{m}{3} \rceil, c_{m} + 4(m - i)) \subseteq P^{(3)}(\lceil \frac{m-1}{3} \rceil, c_{m-1} + 4(m - 1 - i))$.

Now we move to cases.\\
\textit{Case 1.} If $m \equiv 0 \pmod{3}$, then $\lceil\frac{m - 1}{3}\rceil = \lceil\frac{m}{3}\rceil$, and we want to show that $P^{(3)}(\lceil \frac{m}{3} \rceil, 4(m - i)) \subseteq P^{(3)}(\lceil \frac{m-1}{3} \rceil, 1 + 4(m - 1 - i))$. Observe that
\begin{align*}
P^{(3)}\left( \left\lceil \frac{m}{3} \right\rceil, 4(m - i)\right) &\subseteq P^{(3)}\left( \left\lceil \frac{m - 1}{3} \right\rceil, 4 + 4(m - 1 - i)\right)\\&\subseteq P^{(3)}\left( \left\lceil \frac{m-1}{3} \right\rceil, 1 + 4(m - 1 - i)\right),
\end{align*}
as desired.\\
\textit{Case 2.} If $m \equiv 1 \pmod{3}$, then $\lceil\frac{m - 1}{3}\rceil = \lceil\frac{m}{3}\rceil - 1$, and we want to show that $P^{(3)}(\lceil \frac{m}{3} \rceil, 2 + 4(m - i)) \subseteq P^{(3)}(\lceil \frac{m-1}{3} \rceil, 4(m - 1 - i))$. Observe that
\begin{align*}
P^{(3)}\left( \left\lceil \frac{m}{3} \right\rceil, 2 + 4(m - i)\right) &\subseteq P^{(3)}\left( \left\lceil \frac{m - 1}{3} \right\rceil + 1, 4 + 2 + 4(m - 1 - i)\right)\\&\subseteq P^{(3)}\left( \left\lceil \frac{m-1}{3} \right\rceil, 4(m - 1 - i)\right),
\end{align*}
as desired.\\
\textit{Case 3.} If $m \equiv 2 \pmod{3}$, then $\lceil\frac{m - 1}{3}\rceil = \lceil\frac{m}{3}\rceil$, and we want to show that $P^{(3)}(\lceil \frac{m}{3} \rceil, 1 + 4(m - i)) \subseteq P^{(3)}(\lceil \frac{m-1}{3} \rceil, 2 + 4(m - 1 - i))$. Observe that
\begin{align*}
P^{(3)}\left( \left\lceil \frac{m}{3} \right\rceil, 1 + 4(m - i)\right) &\subseteq P^{(3)}\left( \left\lceil \frac{m - 1}{3} \right\rceil, 4 + 1 + 4(m - 1 - i)\right)\\
&\subseteq P^{(3)}\left( \left\lceil \frac{m-1}{3} \right\rceil, 2 + 4(m - 1 - i)\right),
\end{align*}
as desired.

Thus, the result holds for $p = 3$. In similar fashion, the result can also be shown to hold for $p = 5, 7$, completing the proof of Theorem~\ref{betterbound}.  This method of proving the polynomial step could be used to create a similar proof for $p = 2$, simplifying the argument in~\cite{jenkins}.

Now Theorem~\ref{congruence} follows easily from Theorem~\ref{betterbound}.

\congruence*

\begin{proof}
The leading coefficient of a polynomial in the set $P^{(p)}(f^\alpha(m),\gamma_p(m, \alpha))$ is divisible by $p^{\gamma_p(m, \alpha)}$. On the other hand, by Theorem~\ref{betterbound}, $U_{(p)}^\alpha \phi^m$ is an element of that set, so every $p^{\alpha}$th coefficient is divisible by $p^{\gamma_p(m, \alpha)}$.
\end{proof}

\section{The Case $p = 13$}

Since $13$ is also a genus zero prime, it is natural to consider whether any congruences hold for $\phi^{(13)}(z)$ or its powers. Computationally, it appears that unless $m \equiv 5 \pmod{13}$, expressing $U_{13} \phi^{(13)}(z)^m$ as a polynomial in $\phi^{(13)}(z)$ gives a coefficient not divisible by $13$. Furthermore, in the case of $m \equiv 5 \pmod{13}$, it appears that there are two coefficients of that polynomial that are exactly divisible by $13$. If we attempt to use the methods in this paper, this prevents us from chaining properly in the polynomial step. There may be some way around this issue, but it would take a different approach. Computationally, for any prime $p$ less than 1000, it appears that the Fourier coefficient of $q^p$ in $\phi^{(13)}(z)$ is divisible by $13$ precisely when $\tau(p)$ is divisible by $13$, but no further congruences of the style given in this paper are immediately apparent.

\bibliographystyle{amsplain}

\end{document}